\newtheorem{theorem}{Theorem}
\newtheorem{assumption}{Assumption}
\newtheorem{lemma}{Lemma}
\theoremstyle{definition}
\theoremstyle{remark}
\newcommand{\brac}[1]{\left( #1\right)}
\newcommand{\calig}[1]{ {\cal #1} }
\newcommand{\abs}[1]{\left| #1\right|}
\newcommand{\floor}[1]{\left\lfloor #1\right\rfloor}
\newcommand{\closedbrac}[1]{\left[ #1\right]}
\newcommand{\indicator}[1]{ \mathbbm{1}_{\left\{#1\right\}} }
\newcommand{\MVD}[0]{\mathbb{D}_{\calig{M}}}
\newcommand{\MVDup}[0]{\mathbb{D}_{\calig{M}}^\uparrow}
\newcommand{\MVC}[0]{\mathbb{C}_{\calig{M}}}
\newcommand{\Dup}[0]{\mathbb{D}^\uparrow}
\newcommand{\CCup}[0]{\mathbb{C}^\uparrow}
\newcommand{\MVCnoatoms}[0]{\mathbb{C}_{\calig{M}_0}}
\newcommand{\MVCupnoatoms}[0]{\mathbb{C}_{\calig{M}_0}^\uparrow}
\newcommand{\eps}{\varepsilon}
\newcommand{\C}{{\mathbb C}}
\newcommand{\D}{{\mathbb D}}
\newcommand{\N}{{\mathbb N}}
\newcommand{\R}{{\mathbb R}}
\newcommand{\calM}{{\cal M}}
\newcommand{\calS}{{\cal S}}
\newcommand{\skp}{\vspace{\baselineskip}}
\newcommand{\lt}{\left}
\newcommand{\rt}{\right}
\newcommand{\To}{\Rightarrow}
\newcommand{\noi}{\noindent}
\begin{document}

\title{Fluid Limits for Shortest Job First with Aging}
\author{Yonatan Shadmi\thanks{Viterbi Faculty of Electrical Engineering,
Technion -- Israel Institute of Technology, Haifa 32000, Israel}\\shdami@campus.technion.ac.il}
\date{}
\maketitle

\begin{abstract}
    We investigate fluid scaling of single server queueing systems under the shortest job first with aging (SJFA) scheduling policy. 
    We use the measure-valued Skorokhod map to characterize the fluid limit for SJFA queues with a general aging rule and establish convergence results to the fluid limit.
    We treat in detail examples of linear and exponential aging.
    \skp
    
    \noi{\bf AMS subject classifications:} 60K25, 60G57, 68M20
    
    \skp
    
    \noi{\bf Keywords:}
    measure-valued Skorokhod map,
    measure-valued processes,
    fluid limits, shortest job first, aging
    
\end{abstract}

\section{Introduction}

It is well known that prioritizing jobs by their size is optimal in the sense of minimizing the number of jobs in system at any point in time (\cite{Schrage68}, \cite{smith78}). 
This could be important in systems with limited buffer size. 
As the average number of jobs in the queue and the average waiting time are related by Little's law, minimizing the number of jobs in the buffer is equivalent to minimizing the average waiting time (\cite{schrage66},\cite{Little61}). 
However, these kind of policies may cause starvation and deny resource access from large sized jobs. 
To avoid this undesired phenomenon, more dynamic approaches have been suggested, that take into consideration the sojourn time as well as the job size.
In particular, the priority of a job is dictated initially by its size, and is updated as time elapses.
This is done in practice (\cite{bach1986}, \cite{Feitelson2011}) and is common in the literature of computer science (\cite{silberschatz05}, \cite{Behera12}, \cite{Dellamico14}), and is known as aging.
In this case, even large sized jobs will eventually have higher priority than smaller jobs with later arrival time. 
Scaling limits of such policies have not been treated before, even though both fluid and diffusion limits of SJF were extensively studied.
This paper analyzes and characterizes fluid limits of SJFA for the first time; the main tool is the measure valued Skorokhod map from \cite{atar18}.

Prominent examples of aging rules are linear (\cite{silberschatz05}), exponential (\cite{bach1986},\cite{Feitelson2011}), \emph{highest response ratio next} (HRRN) (\cite{Behera12}) and the \emph{fair sojourn protocol} (FSP) (\cite{Dellamico14}). 
Among the many aging rules that our analysis covers, the linear is the simplest one and most of the examples in this paper consider this rule. 
Though somewhat more cumbersome, the exponential rule is used in practice in UNIX (\cite{bach1986}, \cite{Feitelson2011}), which motivates us to study it in details as well.
HRRN and FSP are essentially different and are explained later.
The exponential aging rule in UNIX is implemented as follows (as described in \cite{Feitelson2011}, where other details and examples can be found).
One employs the rule
\begin{align*}
    pri=cpu\_use+base+nice
\end{align*}
where: $pri$ stands for the priority value of a job at the current time, and \textbf{lower priority value} corresponds to \textbf{higher priority}; 
$cpu\_use$ stands for recent CPU usage and is the term responsible of the aging.
This value increases linearly in time only when the process is running (discretely on every clock interrupt), and is divided by two every second;
$base$ is the base priority for user processes;
$nice$ is an optional additional term, if a user is willing to reduce its own priority and be nice to others.
In our analysis, we ignore the terms $base$ and $nice$, as well as the linear increase of $cpu\_use$ which is irrelevant in non-preemptive policies because it happens only while the process is running and not while it waits for service. 
Note that the updating rule of $cpu\_use$ is a discrete time version of exponential aging due to the division by two every second of the priority value.

The linear and exponential aging rules are the simplest among the above examples, as they belong to a class of deterministic aging rules with non-intersecting aging trajectories. 
Here, by aging trajectories we mean trajectories on the time-priority plane, describing how a job priority is updated with time.
This means that jobs do not exchange places in the priority ordering as time progresses. 
A generalization of these two rules is the class of aging rules that can be described through an ordinary differential equation.
In this paper, we consider this class of aging rules. 

Job size based priority systems is an active research topic in the operations research literature since the work of Schrage and Miller \cite{schrage66}, \cite{Schrage68} (see also \cite{smith78} for a simpler proof of optimaility). 
In particular, some research effort has been devoted to scaling limits of such policies.
Fluid limits of the preemptive version, the shortest remaining processing time policy, were shown to exist, be unique, and were characterized in \cite{Down09_fluid_limits}. 
To describe the evolution of the system, a measure valued process was used, one that counts the number of jobs in the queue with job size in some set. 

Diffusion scaling limits for \emph{shortest remaining processing time} (SRPT), the preemptive version of the SJF, appeared shortly after in \cite{Gromoll11} under heavy traffic assumption, again using measure valued processes as state descriptors. 
The measure counting the number of jobs in the queue with size in some set is shown to converge to a Dirac measure,  concentrated on the rightmost point of the support of the job size distribution. 
The magnitude of the limiting Dirac delta is a reflected Brownian motion divided by the largest possible job size (zero if the job size is unbounded). 
A followup of this work is \cite{puha15}, where non-standard diffusion limit was taken to show a generalized state space collapse- the queue length and the workload converge to the same process. 

In \cite{atar18}, Atar et al. prove convergence to the fluid limit of several priority queues, including SRPT and SJF. 
They introduce a measure valued Skorokhod map, and use it to map the arrival process and the server effort process to a measure valued state descriptor. This technique allows considering also time-varying arrival and service distributions. 

Most of the work on such policies studies SRPT, however, Gromoll and Keutel argue in \cite{Gromoll12} that, at least on fluid scale, the performance of SRPT and SJF is the same (they converge to the same fluid limit). 
See also \cite{Nuyens06}, \cite{Harchol-Balter03} and \cite{Bansal01} for more comparisons between SJF and SRPT.

Though the literature on SJF and SRPT and their asymptotic behaviour is rich, there is no mentioning of aging in this context. 
With aging, the system state changes not only when jobs arrive and depart, but also vary with time, as the priority values undergo aging.
These dynamics prevent a direct use of the measure valued Skorokhod map as in \cite{atar18}.

Our objective here is to expand the existing asymptotic analysis to include SJFA with deterministic aging and non-overlapping trajectories. 
The idea is to first identify a coordinate transformation that transforms the system into an equivalent system, and then use the measure valued Skorokhod map with measures defined on the new coordinate system. 
As an easy example, for linear aging, let $S_i$ and $\tau_i$ be the $i$-th job's size and arrival time, resp. 
Whereas SJFA prioritizes according to $S_i-\brac{t-\tau_i}$, this is equivalent to prioritizing according to $S_i+\tau_i$, which is independent of time. 
For a general aging rule, this transformation requires further effort.
With these means we are able to prove convergence to the fluid limit and characterize the limit.

Indeed, linear and exponential aging rules belong to the class of aging rules that are described by an ordinary differential equation, as mentioned above.
However, we have also mentioned the HRRN and the FSP, which are important policies that avoid starvation, but do not fall in the category we cover in this work.
In the HRRN policy, all jobs are assigned on arrival the same priority value, but this value increases with a rate which is the inverse of the job size. 
Thus, the aging trajectories do intersect, making it impossible to use our method. 
In the FSP, there is a virtual processor sharing server and jobs enter service in the order of their virtual departure process. 
In this setting, the aging trajectories are stochastic; they depend on the system state. 
This is a much more interesting and complicated situation which, we assume, requires much more sophisticated tools to analyze.
This motivates us to seek further tools to cover other settings.

The paper is organized as follows.
In \S2 we introduce the queueing model and state the main theorem.
The proof is given in \S3.
\S4 provides four examples with different arrival distributions and aging rules.

\subsection*{Notation}
For any Polish space $\calS$, let $\D_\calS$ denote the space of c\`adl\`ag functions from $\R_+$ to $\calS$, equipped with the Skorokhod J1 metric.
Let $\C$ denote its subspace of continuous functions.
If $\calS=\R_+$ we simply write $\D$ and $\C$.
Let $\Dup$ denote the subspace of non-decreasing elements of $\D$.
Let $\calM$ denote the space of finite Borel measures on $\R$ equipped with the L\'evy metric, which induces the topology of weak convergence, and let $\calM_0$ denote its subspace of atomless Borel measures on $\R$. 
It is well known that $\calM$ is a Polish space, and so is $\MVD$.
Let $\MVDup$ denote the subspace of non-decreasing elements of $\MVD$, where an element $\nu$ of $\MVD$ is said to be non-decreasing if $t\mapsto\int fd\nu_t$ is non-decreasing for every non-negative continuous and bounded function $f$.

\section{SJFA}
\subsection{Queueing model}\label{sec:Queueing model}
Consider a sequence of single server queues, indexed by $N\in\N$, operating under the SJFA with the same aging policy. 
Whenever the server is available, it admits the highest priority job into service, where the priority of each job is as follows. 
The system keeps track of each job's priority value, where lower value means higher priority. 
Each job is assigned an initial value on arrival which is the job size.
The value, $g$, varies smoothly with time according to an aging rule.
To formalize this, we order the jobs according to their arrival, and denote by $\tau_i^N$ and $S_i^N$ the arrival time and job size, resp., of the $i$-th job to enter the $N$-th system. 
The fact that the initial priority value is the job size is expressed by 
\begin{align}\label{eq:inital cond}
    g\brac{\tau_i^N}=S_i^N.    
\end{align}
The aging rule is modeled by the differential equation 
\begin{align}\label{eq:aging rule}
    \frac{dg}{ds}=f\brac{g,s},\quad s\geq\tau_i^N.
\end{align}
In the context of aging, $f$ is assumed to be negative, as the priority of a job increases as times progresses (i.e. $g$ is decreasing).
The linear and exponential aging rules correspond to the choices $f\brac{g,s}=-c$ and $f\brac{g,s}=-\lambda g$, resp., for constants $c,\lambda>0$. 

In fact, under certain conditions, Equation \eqref{eq:aging rule} can be solved backwards as well to obtain a solution for all $s$ in $\R_+$.
This fact is used is this work and, considering Equations \eqref{eq:inital cond} and \eqref{eq:aging rule} jointly, the aging trajectory of the $i$-th job is described by
\begin{align}\label{eq:aging+i.c.}
    \begin{cases}
    \frac{d}{ds}g=f\brac{g,s},\quad s\in\R_+,\\
    g\brac{\tau_i^N}=S_i^N.    
    \end{cases}
\end{align}

We require that $f$ is Lipschitz in the first argument.
Then this problem admits a unique solution on $\R_+$ (\cite[Theorem 6]{birkhoff1989}), that is, a unique aging trajectory $g:\R_+\to\R$ satisfying $g\brac{\tau_i^N}=S_i^N$. 
This way, any point $\brac{x,t}\in\R_+^2$ has exactly one trajectory passing through it, denoted $g_{\brac{x,t}}$. 
In particular, $g_i^N:=g_{\brac{S_i^N,\tau_i^N}}$ is the unique function $h:\R_+\to\R$ such that $\frac{d}{ds}h=f\brac{h(s),s}$, $s\in\R_+$, and $h\brac{\tau_i^N}=S_i^N$.
The trajectories of the linear and exponential aging rules are, resp., $g_{\brac{x,t}}\brac{s}=x-c\brac{s-t}$ and $g_{\brac{x,t}}\brac{s}=x\exp\brac{-\lambda\brac{s-t}}$ (see Figure \ref{fig:aging trajectories}).
\begin{figure}
    \centering
    \includegraphics[scale=0.5]{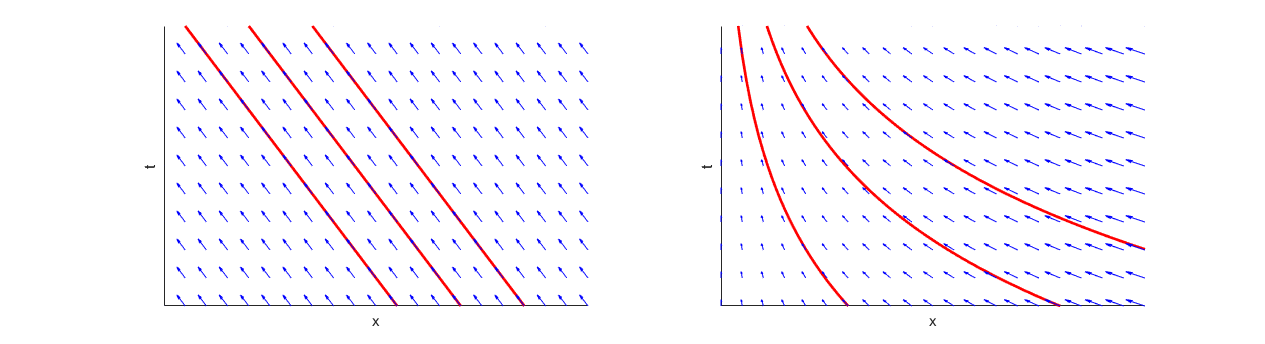}
    \caption{Illustration of aging trajectories in the cases of linear aging (left hand side) and exponential aging (right hand side). The arrows illustrate the flow of a priority value on the time-priority plane. The lines describe the trajectories that the priority values follow.}
    \label{fig:aging trajectories}
\end{figure}
We denote by $\theta_i^N$ the departure time of the $i$-th job, but stress that in our model the priority value continues to change according to the aging rule even after the job has departed from the system.

We introduce the following processes, underlying the queueing model in this section.
\begin{alignat}{3}\label{eq:processes}
    &\alpha^N &&:\quad&&\alpha_t^N\lt(-\infty,x\rt]\text{ is the work to have arrived at the queue by time $t$ }\\
    & && &&\text{with priority value in $\lt(-\infty,x\rt]$ at time $t$},\nonumber\\
    &\mu^N &&:&&\mu^N\brac{t}\text{ is the total work the server can complete by time $t$, if it is never idle},\nonumber\\
    &\xi^N &&:&&\xi_t^N\lt(-\infty,x\rt]\text{ is the work in the queue at time $t$ with priority value in $\lt(-\infty,x\rt]$ at time $t$},\nonumber\\
    &\beta^N &&:&&\beta_t^N\lt(-\infty,x\rt]\text{ is the work completed by the server by time $t$, with priority value in $\lt(-\infty,x\rt]$} \nonumber\\
    & && && \text{at time $t$}.\nonumber
\end{alignat}
The sample paths of $\alpha^N$, $\xi^N$ and $\beta^N$ are in $\MVD$, and $\mu^N$ has sample paths in $\Dup$.
Mathematically, $\alpha^N,\beta^N$ and $\xi^N$ are given by
\begin{align}
    &\alpha_t^N\lt(-\infty,x\rt] = \sum_{i=1}^\infty S_i^N\indicator{\tau_i^N\leq t}\indicator{g_i^N\brac{t}\leq x}\label{eq:alphaN}\\
    &\beta_t^N\lt(-\infty,x\rt] = \sum_{i=1}^\infty S_i^N\indicator{\theta_i^N\leq t}\indicator{g_i^N\brac{t}\leq x},\label{eq:betaN}\\
    &\xi_t^N\lt(-\infty,x\rt]=\alpha_t^N\lt(-\infty,x\rt]-\beta_t^N\lt(-\infty,x\rt]=\sum_{i=1}^\infty S_i^N\indicator{\tau_i^N\leq t< \theta_i^N}\indicator{g_i^N\brac{t}\leq x}.\label{eq:xiN}
\end{align}

The initial state of the buffer is already contained in $\alpha_0^N$. 
We assume that $\mu$ is given by $\mu\brac{t}=\int_0^tm^N\brac{s}ds$, where $m^N\brac{s}$ is the service rate at time $s$ satisfying $m^N\brac{s}\geq m_0>0$ for all $s$. 
Define the busyness process $B^N\brac{t}=\indicator{\text{The server is busy at time $t$}}$; then the work done by the server by time $t$ is $T^N\brac{t}=\int_0^tm^N\brac{s}B^N\brac{s}ds$, and $\iota^N=\mu^N-T^N$ is the lost work due to idleness.
Let the process $J\brac{t}$ denote the residual work of the job in service at time $t$. It then holds that
\begin{align*}
    \beta_t^N\lt[0,\infty\rt)=T^N\brac{t}+J^N\brac{t}-J^N\brac{0}.
\end{align*}

At any time when the server is ready to serve a new job, the scheduler admits to service the job with the highest priority, i.e. with the smallest priority value. 
If the queue is empty and the server is idle, the next job to arrive will be immediately admitted.

\subsection{Main result}
We provide sufficient conditions for $\brac{\bar{\xi}^N,\bar{\beta}^N,\bar{\iota}^N}$ to weakly converge as a result, and characterize this weak limit by a formula.

\begin{assumption}\label{ass:exist unique cond}
The function $f$ in \eqref{eq:aging+i.c.} is continuous as a function of two variables, and is globally Lipschitz in the first argument on $\R_+^2$, i.e. satisfies 
\begin{align*}
    \abs{f\brac{x_1,t}-f\brac{x_2,t}}\leq L\abs{x_1-x_2}
\end{align*}
for some $L>0$ for all $\brac{x_1,t}$ and $\brac{x_2,t}$ in $\R\times\R_+$. 
\end{assumption}
Assumption \ref{ass:exist unique cond} guaranties the existence of a unique path, $g\brac{s}$, that solves the ODE (\cite[Theorem 6]{birkhoff1989})
\begin{align*}
    &\begin{cases}
    \frac{\partial}{\partial s}g=f\brac{g,s},\quad s\in\R_+,\\
    g\brac{t}=x,
    \end{cases}
\end{align*}
for any given $x$ and $t$.
The uniqueness of the path is on $\R\times\R_+$ in the sense that there is only one solution passing through any point $\brac{x,t}\in\R\times\R_+$. 
As a consequence, two solution paths are either the same path, or do not intersect.

\begin{theorem}\label{thm:Fluid limit}
Let the aging rule be as in \eqref{eq:aging+i.c.}. Suppose that Assumption \ref{ass:exist unique cond} holds and that $\brac{\Bar{\alpha}^N,\Bar{\mu}^N}\To\brac{\alpha,\mu}\in\MVCupnoatoms\times\CCup$. 
Denote $\Xi\brac{t,x}:=\alpha_t\lt(-\infty,g_{\brac{x,0}}\brac{t}\rt]$.
Then $\brac{\bar{\xi}^N,\bar{\beta}^N,\bar{\iota}^N}\To\brac{\xi,\beta,\iota}$, where for all $x$ and $t>0$
\begin{align}
    &\beta_t\brac{x,\infty}+\iota\brac{t}=-\inf_{s\in\closedbrac{0,t}}\lt\{\brac{\Xi\brac{s,g_{\brac{x,t}}\brac{0}}-\mu\brac{s}}\land 0\rt\},\label{eq:lim comp1}\\
    &\xi_t\lt(-\infty,x\rt]=\Xi\brac{t,g_{\brac{x,t}}\brac{0}}-\mu\brac{t}-\inf_{s\in\closedbrac{0,t}}\lt\{\brac{\Xi\brac{s,g_{\brac{x,t}}\brac{0}}-\mu\brac{s}}\land 0\rt\}.\label{eq:lim comp2}
\end{align}
\end{theorem}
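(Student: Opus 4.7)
The strategy is to remove the time-dependence of priorities through a coordinate change on the priority axis, reducing the aging queue to a classical (time-homogeneous) SJF queue to which the measure-valued Skorokhod map of \cite{atar18} applies directly. The crucial observation is that, because aging trajectories do not intersect, the ordering of jobs by current priority at any time $t$ coincides with their ordering by the time-invariant \emph{tags} $g_i^N\brac{0}$, so SJFA makes exactly the same scheduling decisions as classical SJF applied to the tag variable.

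Let $\Phi_t\brac{x}:=g_{\brac{x,t}}\brac{0}$, with inverse $\Phi_t^{-1}\brac{y}=g_{\brac{y,0}}\brac{t}$. By Assumption \ref{ass:exist unique cond} and standard ODE theory, each $\Phi_t$ is a continuous strictly monotone bijection of $\R$, and $\brac{t,x}\mapsto\Phi_t\brac{x}$ is jointly continuous. Define the pushforward processes $\tilde\alpha_t^N\lt(-\infty,y\rt]:=\alpha_t^N\lt(-\infty,\Phi_t^{-1}\brac{y}\rt]$, and analogously $\tilde\xi_t^N$ and $\tilde\beta_t^N$. Substituting the identity $\lt\{g_i^N\brac{t}\le\Phi_t^{-1}\brac{y}\rt\}=\lt\{g_i^N\brac{0}\le y\rt\}$ into \eqref{eq:alphaN}--\eqref{eq:xiN} shows that $\brac{\tilde\alpha^N,\mu^N,\tilde\xi^N,\tilde\beta^N}$ is precisely the state descriptor of a classical SJF queue with arrivals $\tilde\alpha^N$ and cumulative service $\mu^N$; in particular $\tilde\alpha^N$ is monotone in $t$ as a measure, which is the structural hypothesis required by \cite{atar18}.

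Granted the transfer of $\brac{\bar\alpha^N,\bar\mu^N}\To\brac{\alpha,\mu}$ to $\brac{\bar{\tilde\alpha}^N,\bar\mu^N}\To\brac{\tilde\alpha,\mu}$, the measure-valued Skorokhod map of \cite{atar18} applied in these coordinates yields a fluid limit $\brac{\tilde\xi,\tilde\beta,\iota}$ characterized by
\begin{align*}
\tilde\xi_t\lt(-\infty,y\rt]&=\tilde\alpha_t\lt(-\infty,y\rt]-\mu\brac{t}-\inf_{s\in[0,t]}\lt\{\lt(\tilde\alpha_s\lt(-\infty,y\rt]-\mu\brac{s}\rt)\wedge 0\rt\},\\
\tilde\beta_t\brac{y,\infty}+\iota\brac{t}&=-\inf_{s\in[0,t]}\lt\{\lt(\tilde\alpha_s\lt(-\infty,y\rt]-\mu\brac{s}\rt)\wedge 0\rt\}.
\end{align*}
Setting $y=\Phi_t\brac{x}=g_{\brac{x,t}}\brac{0}$ and using $\tilde\alpha_s\lt(-\infty,y\rt]=\alpha_s\lt(-\infty,g_{\brac{y,0}}\brac{s}\rt]=\Xi\brac{s,y}$, together with the invariances $\tilde\xi_t\lt(-\infty,\Phi_t\brac{x}\rt]=\xi_t\lt(-\infty,x\rt]$ and $\tilde\beta_t\brac{\Phi_t\brac{x},\infty}=\beta_t\brac{x,\infty}$, recovers exactly \eqref{eq:lim comp1} and \eqref{eq:lim comp2}.

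The principal technical obstacle is propagating the assumed weak convergence through the time-inhomogeneous pushforward $\nu\mapsto\tilde\nu$ with $\tilde\nu_t:=\nu_t\circ\Phi_t^{-1}$. Continuity must be checked on integrals $\int h\brac{y}d\tilde\nu_t\brac{y}=\int h\brac{\Phi_t\brac{x}}d\nu_t\brac{x}$ for bounded continuous $h$: joint continuity of $\brac{t,x}\mapsto\Phi_t\brac{x}$, combined with the atomless hypothesis $\alpha_t\in\calM_0$ (which rules out discontinuity of $y\mapsto\alpha_t\lt(-\infty,\Phi_t^{-1}\brac{y}\rt]$ at any point), delivers joint continuity of $\brac{t,y}\mapsto\tilde\alpha_t\lt(-\infty,y\rt]$ and hence continuity of the pushforward map under the Skorokhod J1 topology. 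With this ingredient in place, the continuous mapping theorem combined with \cite{atar18} yields the stated convergence.
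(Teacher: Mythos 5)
Your proposal follows essentially the same route as the paper: the tag transformation $x\mapsto g_{\brac{x,t}}\brac{0}$ turns SJFA into a time-homogeneous SJF system, the convergence result for the measure-valued Skorokhod map of \cite{atar18} is applied in the transformed coordinates, and continuity of the coordinate-change map transfers the assumed convergence and the conclusion back and forth. The one step you leave implicit is continuity of the \emph{inverse} pushforward, needed to carry $\bar{\tilde\xi}^N\To\tilde\xi$ and $\bar{\tilde\beta}^N\To\tilde\beta$ back to the original coordinates; the paper establishes this together with the forward direction via the Gronwall-type bound $\abs{g_{\brac{x+\eps,t}}\brac{0}-g_{\brac{x,t}}\brac{0}}\le\eps e^{Lt}$, and your joint-continuity-plus-atomlessness argument applies symmetrically, so this is a minor omission rather than a gap in the approach.
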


Above, \eqref{eq:lim comp2} characterizes the measure valued process $\xi$, and \eqref{eq:lim comp1} characterizes both $\iota$ and $\beta$ by taking the limit $x\to\infty$ and then subtracting $\iota(t)$.

In the proof given in the next section, the characterization of the limit is expressed by other means. 
The somewhat complicated notation above is used in order to avoid introducing a transformation of coordinates.

\section{Proof}

The fluid model is the unique triplet $\brac{\xi',\beta',\iota}\in\MVD\times\MVDup\times\Dup$ satisfying for given data $\brac{\alpha',\mu}\in\MVDup\times\Dup$ the following relations.
\begin{enumerate}
    \item $\xi'\closedbrac{0,x}=\alpha'\closedbrac{0,x}-\mu+\beta'\brac{x,\infty}+\iota$,
    \item $\xi'\closedbrac{0,x}=0$ $d\beta'\brac{x,\infty}-a.s.$,
    \item $\xi'\closedbrac{0,x}=0$ $d\iota-a.s.$,
    \item $\beta'\lt[0,\infty\rt)+\iota=\mu$.
\end{enumerate}
This was introduced as the measure-valued Skorokhod problem (MVSP) in \cite{atar18},  followed by an existence and uniqueness theorem (\cite[Proposition  2.8]{atar18}).

Recall that the sample paths of $\alpha^N$ and $\beta^N$ belong to $\MVD$ but not to $\MVDup$, so it is not yet clear how the MVSP is relevant to our model. 

The MVSP defines a map $\brac{\xi',\beta',\iota}=\Theta\brac{\alpha',\mu}$, referred to as the measure-valued Skorokhod map (MVSM). 
Namely, for each $x$, $\brac{\xi'\closedbrac{0,x},\beta'\brac{x,\infty}+\iota}=\Gamma\brac{\alpha'\closedbrac{0,x}-\mu}$, where $\Gamma:\D\to\D_+\times\Dup$, $\Gamma(\psi)=\brac{\Gamma_1(\psi),\Gamma_2(\psi)}$, is the Skorokhod map 
\begin{align*}
    &\Gamma_1\brac{\psi}\brac{t}=\psi\brac{t}-\inf_{s\in\closedbrac{0,t}}\brac{\psi\brac{s}\land 0}, & \Gamma_2\brac{\psi}\brac{t}=-\inf_{s\in\closedbrac{0,t}}\brac{\psi\brac{s}\land 0}.
\end{align*}
Note that \eqref{eq:lim comp1} and \eqref{eq:lim comp2} have this form exactly with $\psi=\Xi\brac{\cdot,g_{\brac{x,t}}\brac{0}}-\mu$.

\begin{proof}[Proof of Theorem \ref{thm:Fluid limit}]

The proof opens with the definition of a coordinate transformation, $\varphi:\R\times\R_+\to\R\times\R_+$, that maps a point $\brac{x,t}\in\R\times\R_+$ to a point $\brac{x',t'}=\varphi\brac{x,t}\in\R\times\R_+$, such that $\alpha'$ and $\beta'$ do belong to $\MVDup$, where $\alpha'$ and $\beta'$ are the weak limits of the measures defined by
\begin{align*}
    &\bar{\alpha'}^N_{t'\brac{x,t}}\closedbrac{0,x'\brac{x,t}}=\bar{\alpha}^N_t\lt(-\infty,x\rt],
    &\bar{\beta'}^N_{t'\brac{x,t}}\closedbrac{0,x'\brac{x,t}}=\bar{\beta}^N_t\lt(-\infty,x\rt].
\end{align*}

The coordinate transformation $\brac{x',t'}=\varphi\brac{x,t}$ is based on the uniqueness of the path $g_{\brac{x,t}}$ due to Assumption \ref{ass:exist unique cond}; for any point $\brac{x,t}$, we assign the point $\brac{x',t'}$, given by
\begin{align*}
    \begin{cases}
    t'=t,\\
    x'=g_{\brac{x,t}}\brac{0}.
    \end{cases}
\end{align*}
The purpose of this transformation is to transform the aging trajectories in such a way that the priorities do not vary with time in the new system. 
The inverse transformation is given by
\begin{align*}
    \begin{cases}
    t=t',\\
    x=g_{\brac{x',0}}\brac{t'}.
    \end{cases}
\end{align*}
Note that $x'\geq 0$ because $g(t)$ is assumed to be decreasing.

It will be convenient to define a function $F:\MVD\to\MVD$ such that $\nu'=F\brac{\nu}$ is defined by
\begin{align*}
    \nu'_{t'}\closedbrac{0,x'}=\nu_{t\brac{x',t'}}\lt(-\infty,x\brac{x',t'}\rt].
\end{align*}

By the inverse transformation, the relation $\nu=F^{-1}\brac{\nu'}$ corresponds to the relation
\begin{align*}
    \nu_t\lt(-\infty,x\rt]=\nu'_{t'\brac{x,t}}\closedbrac{0,x'\brac{x,t}}.
\end{align*}

Note that $x'\mapsto\nu'_{t}\closedbrac{0,x'}$ is right-continuous for all $t>0$ because
\begin{align*}
    \abs{\nu'_t\closedbrac{0,x'+\eps}-\nu'_t\closedbrac{0,x'}}=\nu_t\lt(x\brac{x',t},x\brac{x'+\eps,t}\rt],
\end{align*}
and, by Theorem 6 in \cite{birkhoff1989},
\begin{align*}
    \abs{x\brac{x'+\eps,t}-x\brac{x',t}}=\abs{g_{\brac{x'+\eps,t}}\brac{t}-g_{\brac{x',t}}\brac{t}}\leq\eps e^{Lt}\to_\eps 0.
\end{align*}
In addition, it is monotone as $x'\mapsto x\mapsto\nu_t\lt(-\infty,x\rt]$ is.
We will show later that $\alpha'$ has indeed sample paths in $\MVDup$, and thus the fluid model can be defined through $\brac{\xi',\beta',\iota}=\Theta\brac{\alpha',\mu}$, and $\xi=F^{-1}\brac{\xi'}$, $\beta=F^{-1}\brac{\beta'}$.

Recall the model description from \S\ref{sec:Queueing model}.
Note that under Assumption \ref{ass:exist unique cond} the following conditions are equivalent:
\begin{align*}
    g_i^N\brac{t}\leq x=g_{\brac{x,t}}\brac{t} \iff {S'}_i^N:=g_i^N\brac{0}\leq g_{\brac{x,t}}\brac{0}=x'.
\end{align*}
With this, by Equations \eqref{eq:alphaN}-\eqref{eq:xiN}, $\alpha^N$, $\beta^N$ and $\xi^N$ take the form
\begin{align}
    &\alpha_t^N\lt(-\infty,x\rt] = \sum_{i=1}^\infty S_i^N\indicator{\tau_i^N\leq t}\indicator{{S'}_i^N\leq x'}={\alpha'}_{t'}^N\closedbrac{0,x'},\label{eq:alpha'N}\\
    &\beta_t^N\lt(-\infty,x\rt] = \sum_{i=1}^\infty S_i^N\indicator{\theta_i^N\leq t}\indicator{{S'}_i^N\leq x'}={\beta'}_{t'}^N\closedbrac{0,x'},\label{eq:beta'N}\\
    &\xi_t^N\lt(-\infty,x\rt]=\sum_{i=1}^\infty S_i^N\indicator{\tau_i^N\leq t< \theta_i^N}\indicator{{S'}_i^N\leq x'}={\xi'}_{t'}^N\closedbrac{0,x'}.\label{eq:xi'N}
\end{align}
Now, $t'\mapsto{\beta'}_{t'}^N\brac{x',\infty}$ is monotone and ${\alpha'}^N\in\MVDup$. 

Under Assumption \ref{ass:exist unique cond}, the priority condition is equivalent to the condition
\begin{align*}
    \begin{cases}
        \theta_i^N\geq\tau_j^N\\
        \theta_j^N\geq\tau_i^N\\
        g_i^N\brac{0}< g_j^N\brac{0}
    \end{cases}
    \Longrightarrow \theta_i^N<\theta_j^N.
\end{align*}
This implies
\begin{align*}
    \int{\xi'}_{t}^N\closedbrac{0,x'}d{\beta'}_{t}^N\brac{x',\infty}&=\sum_{j=1}^\infty{\xi'}_{\theta_j^N}^N\closedbrac{0,x'}\brac{{\beta'}_{\theta_j^N}^N\brac{x',\infty}-{\beta'}_{\theta_j^N-}^N\brac{x',\infty}}\\
    &=\sum_{j=1}^\infty\sum_{i=1}^\infty S_i^N\indicator{\tau_i^N\leq \theta_j^N< \theta_i^N}\indicator{g_i^N\brac{0}\leq x'}S_j^N\indicator{g_j^N\brac{0}>x'}\\
    &=0.
\end{align*}
Moreover, the non-idling property implies
\begin{align*}
    {\xi'}^N\closedbrac{0,x'}=0\quad d{\iota}^N\quad a.s.
\end{align*}
In addition
\begin{align*}
    {\xi'}_{t}^N\closedbrac{0,x'}={\alpha'}_{t}^N\closedbrac{0,x'}-{\beta'}_{t}^N\closedbrac{0,x'}={\alpha'}_{t}^N\closedbrac{0,x'}-\mu^N\brac{t}+\iota^N\brac{t}-J^N\brac{t}+J^N\brac{0}+{\beta'}_{t}^N\brac{x'\infty},
\end{align*}
and note that ${\alpha'}^N\in\MVDup$ and $\mu^N+J^N-J\brac{0}\in\Dup$. The later is justified by writing
\begin{align*}
    \mu^N+J^N-J\brac{0}=\brac{\mu^N-T^N}+\brac{T^N+J^N-J\brac{0}}=\iota^N+\beta^N\lt[0,\infty\rt).
\end{align*}
The above holds for the scaled processes as well, thus  $\brac{\bar{{\xi'}}^N,\bar{{\beta'}}^N,\bar{\iota}^N}=\Theta'\brac{\bar{{\alpha'}}^N,\bar{\mu}^N+\bar{J}^N-\bar{J}^N\brac{0}}$.

By Lemma \ref{lem:F continuous} and the continuous mapping theorem, it follows that $\bar{\alpha}^N\To\alpha$ if and only if $\bar{{\alpha'}}^N\To\alpha'=F\brac{\alpha}\in\MVC$.
In addition,
\begin{align*}
\alpha'_{t}\brac{\{x'\}}=\alpha'_{t}\closedbrac{0,x'}-\alpha'_{t}\lt[0,x'\rt)=\alpha_t\lt(-\infty,x\rt]-\alpha_t\brac{-\infty,x}=\alpha_t\brac{\{x\}}=0,
\end{align*}
so $\alpha'_{t'}$ is also atomless.
Finally, $\MVDup$ is a closed subset of $\MVD$ (\cite[Lemma 2.4]{atar18}), and each ${\alpha'}^N$ is in $\MVDup$, so we conclude that $\alpha'$ is  in $\MVCupnoatoms$.

We now have the same setting as in Theorem 5.13 from \cite{atar18}, by which $\brac{\bar{\xi'}^N,\bar{\beta'}^N,\bar{\iota}^N}\To\Theta'\brac{\alpha',\mu}$. Finally, by Lemma \ref{lem:F continuous}, we conclude $\bar{\xi}^N\To\xi=F^{-1}\brac{\xi'}$, $\bar{\beta}^N\To\beta=F^{-1}\brac{\beta'}$.
\end{proof}

\begin{lemma}\label{lem:F continuous}

\begin{enumerate}
    \item The functions $F$ and $F^{-1}$, restricted to $\MVD\brac{\closedbrac{0,T}},T>0$, are continuous.
    \item The function $F$, restricted to $\MVD\brac{\closedbrac{0,T}},T>0$, sends elements of $\MVC$ to $\MVC$.
\end{enumerate}

\end{lemma}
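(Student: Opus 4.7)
The plan is to identify $F$ as a time-wise pushforward by the flow of the ODE \eqref{eq:aging+i.c.} and deduce both claims from (i) joint continuity of the pushforward under weak convergence and (ii) the characterization of Skorokhod J1 convergence via uniform time changes. First I would define the flow $\Phi_t:\R\to\R$ by $\Phi_t(x)=g_{\brac{x,t}}\brac{0}$; by Assumption \ref{ass:exist unique cond} and Gr\"onwall's inequality each $\Phi_t$ is a bi-Lipschitz homeomorphism of $\R$ with Lipschitz constant at most $e^{LT}$ in both directions --- the same bound already used in the proof of Theorem \ref{thm:Fluid limit} to control $\abs{x(x'+\eps,t)-x(x',t)}$ --- and continuous dependence on initial data renders $(t,x)\mapsto\Phi_t(x)$ jointly continuous. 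The defining identity $\nu'_t\closedbrac{0,x'}=\nu_t\lt(-\infty,x\rt]$ with $x'=\Phi_t(x)$ is exactly the pushforward relation on half-lines, so $F$ acts slice-wise as $F(\nu)_t=(\Phi_t)_*\nu_t$, and $F^{-1}$ acts slice-wise as $(\Phi_t^{-1})_*$.

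The key continuity lemma to prove next is that $G:\calM\times[0,T]\to\calM$ given by $G(\mu,t)=(\Phi_t)_*\mu$ is jointly continuous. For $(\mu_n,t_n)\to(\mu,t)$ and a bounded continuous test function $f$, write
\begin{align*}
    \int f\,dG(\mu_n,t_n)-\int f\,dG(\mu,t)=\int\brac{f\circ\Phi_{t_n}-f\circ\Phi_t}d\mu_n+\int f\circ\Phi_t\,d\brac{\mu_n-\mu}.
\end{align*}
The second term vanishes by weak convergence applied to the bounded continuous function $f\circ\Phi_t$. The first is handled via tightness: for every $\eps>0$ a compact $K\subset\R$ carries all but $\eps$ of the $\mu_n$-mass uniformly in $n$, and uniform convergence $\Phi_{t_n}\to\Phi_t$ on $K$ combined with uniform continuity of $f$ on $\Phi_t(K)$ drives the integrand to $0$ uniformly on $K$, while the complement contributes at most $2\norm{f}_\infty\eps$.

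With $G$ jointly continuous in hand, for continuity of $F$ on $\MVD\brac{\closedbrac{0,T}}$ I would take $\nu^n\to\nu$ and choose time changes $\lambda_n$ satisfying $\norm{\lambda_n-\mathrm{id}}_\infty\to 0$ and $\sup_t d_\calM(\nu^n_{\lambda_n(t)},\nu_t)\to 0$; then $F(\nu^n)_{\lambda_n(t)}=G(\nu^n_{\lambda_n(t)},\lambda_n(t))$ while $F(\nu)_t=G(\nu_t,t)$. Since $\nu$ is c\`adl\`ag on a compact interval, its image is relatively compact in $\calM$, and the uniform time-change approximation yields a single compact set $\calK\subset\calM$ containing every $\nu_t$ and every $\nu^n_{\lambda_n(t)}$; joint continuity of $G$ on the compact $\calK\times\closedbrac{0,T}$ is then uniform, upgrading the pointwise convergence to $\sup_t d_\calM\brac{F(\nu^n)_{\lambda_n(t)},F(\nu)_t}\to 0$, which is precisely Skorokhod J1 convergence of $F(\nu^n)$ to $F(\nu)$. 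The argument for $F^{-1}$ is symmetric once one observes that $\Phi_t^{-1}(x')=g_{\brac{x',0}}\brac{t}$ enjoys the same Lipschitz and joint-continuity properties.

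Part 2 follows immediately from the formula $F(\nu)_t=G(\nu_t,t)$: if $t\mapsto\nu_t$ is continuous in the L\'evy metric, so is its composition with the jointly continuous $G$. The main obstacle I anticipate is the bookkeeping required to promote pointwise joint continuity of $G$ to the uniform-in-$t$ estimate demanded by the J1 metric --- specifically, extracting the common compact enclosure of the $\nu^n_{\lambda_n(t)}$ from Skorokhod convergence via the relative compactness of c\`adl\`ag images; everything else reduces to routine ODE flow estimates and standard weak-convergence facts.
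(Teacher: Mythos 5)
Your proof is correct, but it takes a genuinely different route than the paper. The paper argues \emph{quantitatively}: using the Gr\"onwall bound $\abs{g_{\brac{x_1,t}}\brac{0}-g_{\brac{x_2,t}}\brac{0}}\leq e^{Lt}\abs{x_1-x_2}$, it shows directly that the same time change $\gamma$ witnessing $\sup_t d_{\cal L}\brac{\nu_{\gamma(t)},\lambda_t}<\eps$ also witnesses $\sup_t d_{\cal L}\brac{\nu'_{\gamma(t)},\lambda'_t}<\eps e^{LT}$, so that $F$ (and symmetrically $F^{-1}$) is Lipschitz with constant $e^{LT}$ on $\MVD\brac{[0,T]}$; part~2 is then the same estimate applied to the modulus of continuity of $t\mapsto\nu_t$. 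You instead go \emph{soft}: recognize $F$ as the slicewise pushforward $F(\nu)_t=(\Phi_t)_*\nu_t$, prove joint continuity of $(\mu,t)\mapsto(\Phi_t)_*\mu$ via Prokhorov tightness and uniform convergence of the flow on compacts, and then upgrade pointwise to uniform-in-$t$ convergence by observing that the relevant measures all sit in a single compact subset of $\calM$ (relative compactness of c\`adl\`ag images plus the uniform time-change approximation). Both arguments are sound; the paper's buys an explicit Lipschitz constant with essentially no machinery beyond the L\'evy-metric definition, while yours isolates the structural fact that $F$ is a fiberwise pushforward and would carry over verbatim to any flow that is merely jointly continuous (dropping the global Lipschitz hypothesis on $f$), at the cost of invoking tightness and compactness arguments. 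One small point worth spelling out if you write this up: the claim that a single compact $\calK$ contains every $\nu^n_{\lambda_n(t)}$ needs a short total-boundedness argument (a finite $\eps/2$-net of the relatively compact image of $\nu$ serves as an $\eps$-net for all but finitely many $n$, and the remaining $n$ each have relatively compact image), and on the tightness step you are implicitly using that L\'evy-metric convergence of finite measures entails convergence of total masses, so Prokhorov applies.
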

\begin{proof}
\begin{enumerate}
    \item Denote by $B\brac{\nu,\eps}$ the open ball in $\MVD\brac{\closedbrac{0,T}}$ with radius $\eps$, centered around $\nu$. 
We show that $F\brac{B\brac{\nu,\eps}}\subset B\brac{\nu',e^{LT}\eps}$, $\nu'=F\brac{\nu}$.
Take any $\lambda\in B\brac{\nu,\eps}$ and denote $\lambda'=F\brac{\lambda}$, then there exists a strictly increasing mapping $\gamma:\closedbrac{0,T}\to\closedbrac{0,T}$, with $\gamma\brac{0}=0$, $\gamma\brac{T}=T$ and $\sup_{t\in\closedbrac{0,T}}\abs{\gamma\brac{t}-t}<\eps$, such that for any $t\in\closedbrac{0,T}$ and any $x$
\begin{align*}
    \lambda_t\lt(-\infty,x-\eps\rt]-\eps\leq\nu_{\gamma\brac{t}}\lt(-\infty,x\rt]\leq\lambda_t\lt(-\infty,x+\eps\rt]+\eps,
\end{align*}
which is just $\sup_{t\in\closedbrac{0,T}}d_{\cal{L}}\brac{\nu_{\gamma\brac{t}},\lambda_t}<\eps$.

Making yet another use of Assumption \ref{ass:exist unique cond}, Theorem 6 in \cite{birkhoff1989} yields
\begin{align*}
    &x'\brac{x,t}-x'\brac{x-\eps,t}=g_{\brac{x,t}}\brac{0}-g_{\brac{x-\eps,t}}\brac{0}\leq \eps e^{Lt}\leq\eps e^{LT},\\
    &x'\brac{x+\eps,t}-x'\brac{x,t}=g_{\brac{x+\eps,t}}\brac{0}-g_{\brac{x,t}}\brac{0}\leq \eps e^{Lt}\leq\eps e^{LT}.
\end{align*}
Using the above inequalities, for any $t'\in\closedbrac{0,T}$ and $x$,
\begin{align*}
    &\nu'_{\gamma\brac{t}}\closedbrac{0,x'}=\nu_{\gamma\brac{t}}\lt(-\infty,x\rt]\leq\lambda_t\lt(-\infty,x+\eps\rt]+\eps=\lambda'_{t}\closedbrac{0,x'\brac{x+\eps,t}}+\eps<\lambda'_{t}\closedbrac{0,x'+\eps e^{LT}}+\eps e^{LT},\\
    &\nu'_{\gamma\brac{t}}\closedbrac{0,x'}=\nu_{\gamma\brac{t}}\lt(-\infty,x\rt]\geq\lambda_t\lt(-\infty,x-\eps\rt]-\eps=\lambda'_{t}\closedbrac{0,x'\brac{x-\eps,t}}-\eps>\lambda'_{t}\closedbrac{0,x'-\eps e^{LT}}-\eps e^{LT}.
\end{align*}
Hence, $\sup_{t'\in\closedbrac{0,T}}d_{\cal{L}}\brac{\nu'_{\gamma\brac{t}},\lambda'_{t}}<\eps e^{LT}$.

Continuity for $F^{-1}$ follows by similar arguments.
\item Just as in the first part of the Lemma, if for all $t>0$ and $x$
\begin{align*}
    \nu_{t+h}\closedbrac{0,x-\eps}-\eps \leq\nu_t\closedbrac{0,x}\leq\nu_{t+h}\closedbrac{0,x'+\eps }+\eps,
\end{align*}
then for all $t>0$ and $x'>0$
\begin{align*}
    \nu'_{t+h}\closedbrac{0,x'-\eps e^{LT}}-\eps e^{LT}\leq\nu'_t\closedbrac{0,x'}\leq\nu'_{t+h}\closedbrac{0,x'+\eps e^{LT}}+\eps e^{LT}.
\end{align*}
This means that for any $\eps>0$, choosing $h$ small enough such that $d_{\cal{L}}\brac{\nu_{t+h},\nu_t}<\delta$, $\delta=\eps e^{-LT}$ (such $h$ exists as we assume $\nu\in\MVC$),  implies $d_{\cal{L}}\brac{\nu'_{t+h},\nu'_t}<\eps$.   
\end{enumerate}
\end{proof}

\section{Examples} 

In this section we provide insight through simplifications and examples.
The examples assume some workload arrival distribution of a fluid system, and provide explicit solutions.
Recall that for a pair of data $\brac{\alpha,\mu}\in\MVCnoatoms\times\CCup$, the fluid solution is obtained by first applying the appropriate change of coordinates $x'=g_{\brac{x,t}}\brac{0}$ and then finding the measure valued process in $\MVCupnoatoms$, $\alpha'=F\brac{\alpha}$ (defined by $\alpha'_t\closedbrac{0,x'}=\alpha_t\lt(-\infty,x\rt]$).
Then one applies the MVSM $\brac{\xi',\beta',\iota}=\Theta\brac{\alpha',\mu}$ and finally $\xi=F^{-1}\brac{\xi'}$, $\beta=F^{-1}\brac{\beta'}$.
The entire procedure is summarized in the formulation of Theorem \ref{thm:Fluid limit} through Equations \eqref{eq:lim comp1} and \eqref{eq:lim comp2},
\begin{align*}
    &\beta_t\brac{x,\infty}+\iota\brac{t}=-\inf_{s\in\closedbrac{0,t}}\lt\{\brac{\Xi\brac{s,g_{\brac{x,t}}\brac{0}}-\mu\brac{s}}\land 0\rt\},\\
    &\xi_t\lt(-\infty,x\rt]=\Xi\brac{t,g_{\brac{x,t}}\brac{0}}-\mu\brac{t}-\inf_{s\in\closedbrac{0,t}}\lt\{\brac{\Xi\brac{s,g_{\brac{x,t}}\brac{0}}-\mu\brac{s}}\land 0\rt\},
\end{align*}
with $\Xi\brac{t,x}=\alpha_t\lt(-\infty,g_{\brac{x,0}}\brac{t}\rt]$.
In many cases, it is more natural to describe the arrival process through the instantaneous arrival distribution rather than through the cumulative arrival process $\alpha$.
The instantaneous arrival distribution describes the distribution of either the job sizes or the workload (each can be derived from the other) of the arriving work at each instant.
Therefore, it is useful to have a relation connecting $\alpha$ with this distribution.
Let $\pi_t$ be the instantaneous workload arrival distribution at time $t$, then $\alpha_t\lt(-\infty,x\rt]=\int_0^t\pi_s\closedbrac{0,g_{\brac{x,t}}\brac{s}}ds$.
All the following examples are of this form, where various instantaneous workload arrival distributions are considered.
First we present a simplification of the formula for $\xi$ and $\beta$ under certain conditions.
There appear four examples afterwards.
The first three examples are of linear aging and the last is of exponential aging.
In the first example, the instantaneous workload arrival distribution is uniform over the interval $[0,1]$, in the second example it is uniform over a time-varying interval, and in the two last examples it is Pareto distributed.

\subsection*{A simplification} 
Assume for simplicity that the arrival rate is larger than the service rate; in this case, $\iota=0$. 
We then may guess the following solution on the prime plane:
\begin{align}\label{eq:guess}
\begin{cases}
    \beta'_t\closedbrac{0,x'}=\alpha'_t\closedbrac{0,x'}\land\mu\brac{t},\\
    \xi'_t\closedbrac{0,x'}=\alpha'_t\closedbrac{0,x'}-\beta'_t\closedbrac{0,x'}=\closedbrac{\alpha'_t\closedbrac{0,x'}-\mu\brac{t}}^+.
    \end{cases}
\end{align}
Note that this is not always a valid guess, as it is not clear that 
\begin{align*}
    \beta'_t\brac{x',\infty}=\mu\brac{t}-\beta'_t\closedbrac{0,x'}=\closedbrac{\mu\brac{t}-\alpha'_t\closedbrac{0,x'}}^+
\end{align*}
is non-decreasing in $t$.
However, it is true in many cases, e.g. when the job size distribution, and the arrival and service rates are time invariant.
To verify that this guess is the true solution whenever $\beta'_t\brac{x',\infty}$ is non-decreasing, observe that either $\xi'_t\closedbrac{0,x'}$ or $\beta'_t\brac{x',\infty}$ is zero, 
Hence, $\int\xi'_t\closedbrac{0,x'}d\beta'_t\brac{x',\infty}=0$, and by definition $\beta'_t\lt[0,\infty\rt)=\mu\brac{t}$ and $\xi'\closedbrac{0,x'}=\alpha'\closedbrac{0,x'}-\beta'\closedbrac{0,x'}$.
Thus, this guess is the unique solution to the MVSP with data $\brac{\alpha',\mu}$.

The interpretation of the guess \eqref{eq:guess} is that there is a process $x^*\brac{t}=\inf\{x:\alpha'_t\closedbrac{0,x}\geq\mu\brac{t}\}$ such that the entire mass below $x^*\brac{t}$ was served by time $t$ and the entire mass above it is in the queue at time $t$. 
The guess \eqref{eq:guess} is equal to the fluid solution if $t\mapsto x^*\brac{t}$ is non-decreasing.
Note that the aging "pushes" $x^*\brac{t}$ to the right, thus does not invalidates the guess.

This simplification is used in the first example in this section to illustrate its usefulness.
We do not use this simplification in the other examples.

\subsection*{Linear aging with uniformly distributed workload} 
We begin by analyzing the case of uniformly distributed workload, specifically on $\closedbrac{0,x}$, with constant arrival rate. 
This corresponds for $\pi_s\closedbrac{0,x}=1\land\brac{x\lor 0}$.
The aging rule in this example is chosen to be the linear aging rule.
This is a rather simple example where closed form solutions can be found.
For linear aging, the cumulative workload arrival process is derived from $\pi$ by $\alpha_t\lt(-\infty,x\rt]=\int_0^t\pi_s\closedbrac{0,x+t-s}ds$.
We can compute
\begin{align*}
    \alpha_t\lt(-\infty,x\rt]&=\begin{cases}
        t &\text{ if }x>1,\\
        t+x-\frac{x^2}{2}-\frac{1}{2} &\text{ if }0<x<1\text{ and } x>1-t,\\
        xt+\frac{t^2}{2} &\text{ if }0<x<1\text{ and } x<1-t,\\
        x+t-\frac{1}{2} &\text{ if }x<0\text{ and } x>1-t,\\
        0 &\text{ if }x<0\text{ and } x<-t,\\
        \frac{\brac{x+t}^2}{2} &\text{ if }x<0\text{ and } -t<x<1-t,\\
    \end{cases}
\end{align*}
and, consequently, compute
\begin{align*}
    \alpha'_t\lt[0,x'\rt]&=\begin{cases}
        t &\text{ if }x'>1+t,\\
        x'-\frac{\brac{x'-t}^2}{2}-\frac{1}{2} &\text{ if }t<x'<1+t\text{ and } x'>1,\\
        x't-\frac{t^2}{2} &\text{ if }t<x'<t+1\text{ and } x'<1,\\
        x'-\frac{1}{2} &\text{ if }x'<t\text{ and } x'>1,\\
        0 &\text{ if }x'<t\text{ and } x'<0,\\
        \frac{x'^2}{2} &\text{ if }x'<t\text{ and } 0<x'<1.\\
    \end{cases}
\end{align*}
If we choose for example $\mu\brac{t}=t/2$, we can obtain $x^*\brac{t}=\brac{t+1}/2$, which is increasing with $t$. Hence, the guess \eqref{eq:guess} is valid. To simplify the expressions of $\xi'$ and $\beta'$, we consider times $t>1$. 
Then the solution is given by
\begin{align*}
    \beta'_t\lt[0,x'\rt]&=\begin{cases}
        \frac{t}{2} &\text{ if }x'>\frac{1+t}{2},\\
        x'-\frac{1}{2} &\text{ if }1<x'<\frac{t+1}{2},\\
        0 &\text{ if }x'<0,\\
        \frac{x'^2}{2} &\text{ if } 0<x'<1,\\
    \end{cases}\\
    \xi'_t\lt[0,x'\rt]&=\begin{cases}
        \frac{t}{2} &\text{ if }x'>1+t,\\
        x'-\frac{\brac{x'-t}^2}{2}-\frac{1}{2}-\frac{t}{2} &\text{ if }t<x'<1+t\text{ and } x'>1,\\
        x'-\frac{1+t}{2} &\text{ if }t>x'>\frac{t+1}{2},\\
        0 &\text{ if }x'<\frac{t+1}{2}.\\
    \end{cases}
\end{align*}

For the sake of completeness, we give the explicit expression for $\xi_t\lt(-\infty,x\rt]$,
\begin{align*}
    \xi_t\lt[0,x\rt]&=\begin{cases}
        \frac{t}{2} &\text{ if }x>1,\\
        x+t-\frac{x^2}{2}-\frac{1}{2}-\frac{t}{2} &\text{ if }0<x<1\text{ and } x>1-t,\\
        x+\frac{t-1}{2} &\text{ if }0>x>\frac{1-t}{2},\\
        0 &\text{ if }x<\frac{1-t}{2}.\\
    \end{cases}
\end{align*}

Figure \ref{fig:examples}(\subref{fig:Linear uniform}) shows the evolution of $\xi_t\lt(-\infty,x\rt]$ with time.
In this figure, and in all the other as well, the system is assumes to be empty at time $t=0$.

\begin{figure}
\begin{subfigure}{.5\textwidth}
  \centering
  \includegraphics[scale=0.7]{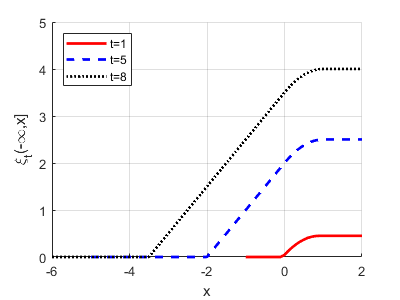}  
  \captionsetup{width=0.9\linewidth}
  \caption{$\xi_t\lt(-\infty,x\rt]$ when the aging is linear and the workload arrival distribution is uniform over the interval $[0,1]$.}
    \label{fig:Linear uniform}
\end{subfigure}
\begin{subfigure}{.5\textwidth}
  \centering
  \includegraphics[scale=0.7]{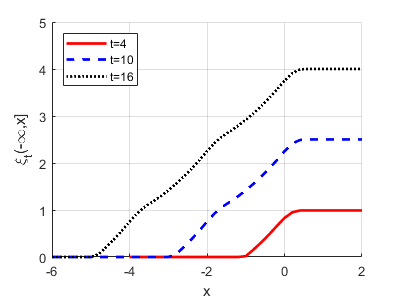} 
  \captionsetup{width=0.9\linewidth}
  \caption{$\xi_t\lt(-\infty,x\rt]$ when the aging is linear and the workload arrival distribution is uniform with time-varying support.}
    \label{fig:time varying supp}
\end{subfigure}
\begin{subfigure}{.5\textwidth}
  \centering
  \includegraphics[scale=0.7]{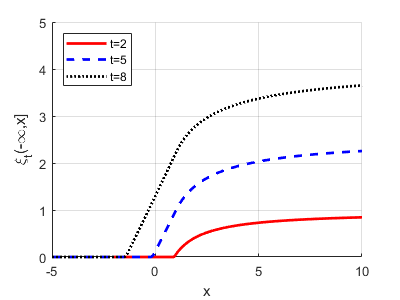}  
  \captionsetup{width=0.9\linewidth}
  \caption{$\xi_t\lt(-\infty,x\rt]$ when the aging is linear and the workload arrival distribution is Pareto with parameter $\eta=1.2$.}
    \label{fig:linear pareto eta}
\end{subfigure}
\begin{subfigure}{.5\textwidth}
  \centering
  \includegraphics[scale=0.7]{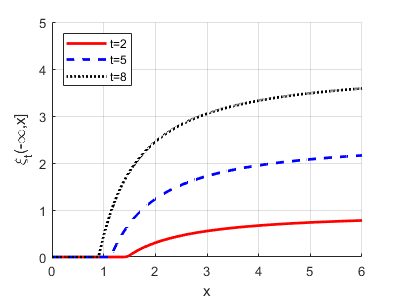}  
  \captionsetup{width=0.9\linewidth}
  \caption{$\xi_t\lt(-\infty,x\rt]$ when the aging is exponential and the workload arrival distribution is Pareto with parameters $\eta=1.2$ and $\lambda=0.1$.}
    \label{fig:exp pareto}
\end{subfigure}
\caption{Examples of $\xi_t\lt(-\infty,x\rt]$ for different aging rules and different workload arrival distribution.}
\label{fig:examples}
\end{figure}

\subsection*{Time varying arrival distribution} 
In many cases it is possible to obtain an expression for $\alpha_t\lt(-\infty,x\rt]$ when the instantaneous workload arrival distribution is periodic. 
This is exactly the setting in the following example.
In this example, the instantaneous workload distribution is again uniform, but its support is now a periodically-time-varying interval $\closedbrac{0,a(t)}$.
This corresponds to $\pi_t\closedbrac{0,x}=a(s)\land\brac{x\lor 0}$.
Our aging rule is still linear.
We solve for a specific choice of $a(t)$, a triangular wave- a piecewise linear periodic function as illustrated in Figure \ref{fig:WL distribution}.

\begin{figure}
    \centering
    \begin{tikzpicture}[scale=0.4]
    \draw[thick,->] (0,7)--(0,0)--(12,0);
    \draw[thick] (0,3)--(2,6)--(4,3)--(6,6)--(8,3)--(10,6)--(12,3);

\node[anchor=south] at (0,7) {$a(t)$};
\node[anchor=east] at (0,3) {$\frac{1}{2}$};
\node[anchor=east] at (0,6) {$1$};
\node[anchor=north] at (0,0) {$0$};
\node[anchor=north] at (2,0) {$1$};
\node[anchor=north] at (4,0) {$2$};
\node[anchor=north] at (6,0) {$3$};
\node[anchor=north] at (8,0) {$4$};
\node[anchor=north] at (10,0) {$5$};
\node[anchor=north west] at (12,0) {$t$};
\end{tikzpicture}
    \caption{Time-varying upper boundary of the workload distribution support.}
    \label{fig:WL distribution}
\end{figure}
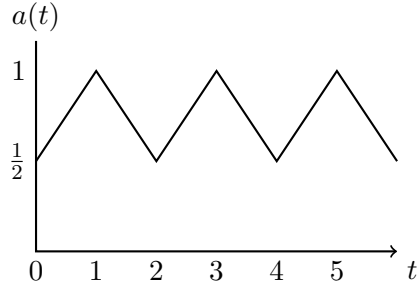

To describe the solution, we define the following.
\begin{align*}
    N\brac{x,t}&=2\floor{\frac{x+t}{2}},\\
    A_1\brac{x,t}&=\frac{1+\floor{x+t}}{2},\\
    s_1^*\brac{x,t}&=2\brac{x+t-A_1\brac{x,t}},\\
    A_2\brac{x,t}&=\frac{1}{2}-\floor{\frac{x+t}{2}},\\
    s_2^*\brac{x,t}&=\frac{2\brac{x+t-A_2\brac{x,t}}}{3}.
\end{align*}
To ease the notation, the dependency of $x$ and $t$ is omitted, and we simply write $N,A_i,s_i^*,i=1,2$. 
Let $D_1=\{\brac{x,t}:N\leq x+t<N+1/2\}$ and $D_2=\{\brac{x,t}:N+1/2\leq x+t<N+2\}$.
Solving the integral $\int_0^t\pi_s\closedbrac{0,x+t-s}ds$ gives
\begin{align*}
    &\alpha_t\lt(-\infty,x\rt]=\\
    &\begin{cases}
        \frac{3}{4}\lt\lfloor t\rt\rfloor+t-\lt\lfloor t\rt\rfloor-\frac{\brac{t-\lt\lfloor t\rt\rfloor}^2}{4} &\text{ if }t\leq s^*\text{ and }\lt\lfloor t\rt\rfloor\text{ is odd}\\
        \frac{3}{4}\lt\lfloor t\rt\rfloor+\frac{t-\lt\lfloor t\rt\rfloor}{2}+\frac{\brac{t-\lt\lfloor t\rt\rfloor}^2}{4} &\text{ if }t\leq s^*\text{ and }\lt\lfloor t\rt\rfloor\text{ is even}\\
        \frac{3}{4}\lt\lfloor s_1^*\rt\rfloor+s_1^*-\lt\lfloor s_1^*\rt\rfloor-\frac{\brac{s_1^*-\lt\lfloor s_1^*\rt\rfloor}^2}{4}+\brac{x+t}\brac{t-s_1^*}-\frac{t^2-s_1^{*2}}{2} &\text{ if }0\leq s_1^*<t\leq x+t\text{ and }D_1\\
        \frac{3}{4}\lt\lfloor s_2^*\rt\rfloor+\frac{s_2^*-\lt\lfloor s_2^*\rt\rfloor}{2}+\frac{\brac{s_2^*-\lt\lfloor s_2^*\rt\rfloor}^2}{4}+\brac{x+t}\brac{t-s_2^*}-\frac{t^2-s_2^{*2}}{2} &\text{ if }0\leq s_2^*<t\leq x+t\text{ and }D_2\\
        xt+\frac{t^2}{2} &\text{ if }s^*<0<t<x+t\\
        \frac{\brac{x+t}^2}{2} &\text{ if }s^*<0<x+t<t\\
        \frac{3}{4}\lt\lfloor s_1^*\rt\rfloor+s_1^*-\lt\lfloor s_1^*\rt\rfloor-\frac{\brac{s_1^*-\lt\lfloor s_1^*\rt\rfloor}^2}{4}+\brac{x+t}\brac{t+x-s_1^*}-\frac{\brac{t+x}^2-s_1^{*2}}{2} &\text{ if }0<s_1^*<x+t<t\text{ and }D_1\\
        \frac{3}{4}\lt\lfloor s_2^*\rt\rfloor+\frac{s_2^*-\lt\lfloor s_2^*\rt\rfloor}{2}+\frac{\brac{s_2^*-\lt\lfloor s_2^*\rt\rfloor}^2}{4}+\brac{x+t}\brac{t+x-s_2^*}-\frac{\brac{t+x}^2-s_2^{*2}}{2} &\text{ if }0<s_2^*<x+t<t\text{ and }D_2\\
        0 & \text{ if }x+t<0.
    \end{cases}
\end{align*}
In the above, the condition $s^*>t$ should be interpreted as either $s_1^*>t$ and $D_1$, or $s_2^*>t$ and $D_2$; and likewise for $s^*<0$.

It is now possible to find $\alpha'$ and $x^*(t)$, though obtaining $\xi'$, $\beta'$ and $\iota'$ is rather tedious even for such $\mu(t)$ for which the guess \eqref{eq:guess} is valid, and we do not continue with that line.
The general solution, as mentioned at the beginning of the section, takes the form
\begin{align*}
    \xi_t\lt(-\infty,x\rt]=\Xi\brac{t,x+t}-\mu\brac{t}-\inf_{s\in\closedbrac{0,t}}\lt\{\brac{\Xi\brac{s,x+t}-\mu\brac{s}}\land 0\rt\},
\end{align*}
with $\Xi\brac{t,x}=\alpha_t\lt(-\infty,x-t\rt]$.
Figure \ref{fig:examples}(\subref{fig:time varying supp}) shows the evolution of $\xi_t\lt(-\infty,x\rt]$ with time.
Note the difference between \ref{fig:examples}(\subref{fig:Linear uniform}) and \ref{fig:examples}(\subref{fig:time varying supp}).
The fact that $\pi_s$ is periodic is reflected in the wiggliness of the graph.

\subsection*{Pareto distributed workload} 
The Pareto distribution is a heavy tailed distribution, used often in queueing theory to model internet packet inter-arrival times (\cite{Mirtchev2008}).
This distribution is of a special interest in this section because we can find explicit expressions for $\alpha$ when the workload is Pareto distributed, both for linear and for exponential aging.
We begin with the linear aging setting, and compare the results later with the exponential aging setting.
For the Pareto distribution of the arriving workload we choose the parameters 1 and $\eta$, which correspond to $\pi_s\closedbrac{0,x}=\brac{1-x^{-\eta}}\indicator{x\geq 1}$.
In this case we can calculate $\alpha$  and obtain
\begin{align*}
    \alpha_t\closedbrac{0,x}&=\int_0^t\pi_s\closedbrac{0,x+t-s}ds=\int_0^t\brac{1-\brac{x+t-s}^{-\eta}}\indicator{x+t-s\geq 1}ds\\
    &=
    \begin{cases}
    0 & x<1-t,\\
    x+t-1+\frac{\brac{x+t}^{1-\eta}-1}{\eta} & 1-t\leq x\leq 1,\\
    t+\frac{\brac{x+t}^{1-\eta}-x^{1-\eta}}{\eta} & 1< x. 
    \end{cases}
\end{align*}
Now one can then find
\begin{align*}
    \alpha'_t\closedbrac{0,x'}&=
    \begin{cases}
    0 & x'<1,\\
    x'-1+\frac{x'^{1-\eta}-1}{\eta} & 1\leq x'\leq 1+t,\\
    t+\frac{x'^{1-\eta}-\brac{x'-t}^{1-\eta}}{\eta} & 1+t< x'. 
    \end{cases}
\end{align*}
The solution takes again the form
\begin{align*}
    \xi_t\lt(-\infty,x\rt]=\Xi\brac{t,x+t}-\mu\brac{t}-\inf_{s\in\closedbrac{0,t}}\lt\{\brac{\Xi\brac{s,x+t}-\mu\brac{s}}\land 0\rt\},
\end{align*}
with $\Xi\brac{t,x}=\alpha_t\lt(-\infty,x-t\rt]$.

Figure \ref{fig:examples}(\subref{fig:linear pareto eta}) shows the evolution of $\xi_t\lt(-\infty,x\rt]$ with time, when we used $\eta=1.2$.
Figure \ref{fig:linear pareto t} shows the influence of the Pareto parameter $\eta$ on the shape of the graphs. 

\begin{figure}
 \centering
    \includegraphics[scale=0.7]{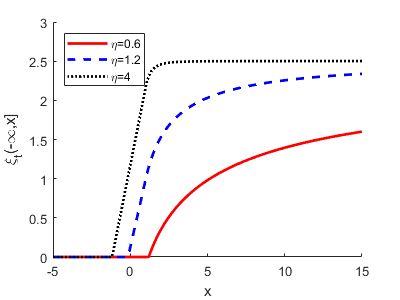}
    \caption{$\xi_t\lt(-\infty,x\rt]$ when the aging is linear and the workload arrival distribution is Pareto, after 5 time units.}
    \label{fig:linear pareto t}
\end{figure}

\subsection*{Exponential aging} 
We continue with an example in a setting with exponential aging.
For this example we choose again the Pareto distribution with parameters $1$ and $\eta$ as the instantaneous workload arrival distribution corresponding to $\pi_s\closedbrac{0,x}=\brac{1-x^{-\eta}}\indicator{x\geq 1}$.
We calculate
\begin{align*}
    &\alpha_t\closedbrac{0,x}=\int_0^t\pi_s\closedbrac{0,x e^{-\lambda\brac{s-t}}}ds=\int_0^{t+\lambda^{-1}\log\brac{x\land 1}}\brac{1-x^{-\eta}e^{\lambda\eta \brac{s-t}}}ds\\
    &\hspace{9cm}=
    \begin{cases}
    0 & x\leq e^{-\lambda t}\\
    t+\frac{1}{\lambda}\log(x)-\frac{x^{-\eta}}{\lambda\eta}\brac{x^{\eta}-e^{-\lambda\eta t}} &e^{-\lambda t}<x< 1,\\
    t-\frac{x^{-\eta}}{\lambda\eta}\brac{1-e^{-\lambda\eta t}} &x\geq 1,
    \end{cases}
    \\
    &\alpha'_t\closedbrac{0,x'}=\alpha_t\closedbrac{0,x'e^{-\lambda t}}=
    \begin{cases}
    0 & x'\leq 1,\\
    \frac{1}{\lambda}\log(x')-\frac{1-x'^{-\eta}}{\lambda\eta} &1<x'<e^{\lambda t},\\
    t-\frac{x'^{-\eta}}{\lambda\eta}\brac{e^{\lambda\eta t}-1} &x'\geq e^{\lambda t}.
    \end{cases}
\end{align*}
The solution takes the form
\begin{align*}
    \xi_t\lt(-\infty,x\rt]=\Xi\brac{t,xe^{\lambda t}}-\mu\brac{t}-\inf_{s\in\closedbrac{0,t}}\lt\{\brac{\Xi\brac{s,xe^{\lambda t}}-\mu\brac{s}}\land 0\rt\},
\end{align*}
with $\Xi\brac{t,x}=\alpha_t\lt(-\infty,xe^{-\lambda t}\rt]$.

Figure \ref{fig:examples}(\subref{fig:exp pareto}) shows the evolution of $\xi_t\lt(-\infty,x\rt]$ with time for this example with the choice $\mu(t)=t/2$.
It is interesting to compare \ref{fig:examples}(\subref{fig:linear pareto eta}) with \ref{fig:examples}(\subref{fig:exp pareto}), both sharing the same workload arrival distribution while differing in their aging rules.


\bibliographystyle{is-abbrv}
\bibliography{Bib}

\end{document}